\documentclass[reqno]{amsart}

\usepackage{amssymb,amsfonts,amstext,amsthm,hyperref,cleveref,xcolor}

\setlength\parindent{24pt}
\usepackage{graphics}
\usepackage{graphicx}
\usepackage{epstopdf}
\usepackage{indentfirst}
\usepackage{float}
\usepackage[numbers,sort&compress]{natbib}
\bibpunct[, ]{[}{]}{,}{n}{,}{,}
\makeatletter
\def\NAT@def@citea{\def\@citea{\NAT@separator}}
\makeatother
\theoremstyle{plain}
\newtheorem{theorem}{Theorem}[section]
\newtheorem{lemma}[theorem]{Lemma}
\newtheorem{corollary}[theorem]{Corollary}
\newtheorem{proposition}[theorem]{Proposition}
\crefrangeformat{equation}{#3(#1)#4--#5(#2)#6}
\crefname{enumi}{\unskip}{\unskip}

\theoremstyle{definition}

\newtheorem{remark}[theorem]{Remark}

\begin{document}

\title[Counting Elements on Full Matrix groups]{Counting Elements on Full Matrix Groups over finite Field with prescribed eigenvalues}

\author{Ivan Gargate}
\address{UTFPR, Campus Pato Branco, Rua Via do Conhecimento km 01, 85503-390 Pato Branco, PR, Brazil}
\email{ivangargate@utfpr.edu.br}

\author{Michael Gargate}
\address{UTFPR, Campus Pato Branco, Rua Via do Conhecimento km 01, 85503-390 Pato Branco, PR, Brazil}
\email{michaelgargate@utfpr.edu.br}

\begin{abstract}
In the present article we shown a formula to compute the number of all matrices over the finite field $F$ whit prescribed eigenvalues. Using this formula we obtain one inequality for the number of $(k+1)$-potent elements over finite rings. 
\end{abstract}


\keywords{Upper triangular matrices; finite order; commutators }

\maketitle

\section{Introduction}\label{intro}

Let $F$ be a finite field with $|F|=q$ and denote by $M_n(F)$ the  $n\times n$ matrix algebra with entries over $F$. Compute the number of special elements over the group of full matrix or the group of triangular matrix is realized for various authors, for instance, in triangular matrix groups we have the works of Slowik \cite{Slowik} and Hou \cite{Hou} that compute the number of involutions and idempotents, respectively,  Slowik in \cite{Slowik2} compute the number of matrices of finite order. For the incidence algebras of a finite poset, the authors in \cite{Gargate4} compute the number of involutions depending of their respective poset. Recently, the authors in \cite{Gargate1}  and \cite{Gargate3} compute the number of $(k+1)$-potent matrices and the matrices such that are solutions of one quadratic equation. Also in \cite{Gargate2} the authors compute the number of coninvolution matrices over the ring of Gaussian Integers module $p$ and the Quaternion Integers module $p$. For full matrix groups Cheraghpour and Ghosseiri \cite{Ghosseiri} compute the number of idempotent and nilpotent elements.

Various special and important matrices can be studied by  know  their eigenvalues, for instance:
\begin{itemize}
    \item [1.)]If $\alpha=0$ and
 $\theta=1$ are eigenvalues of a matrix $A$ then $A$ is an idempotent matrix.
 \item[2.)] If $\alpha=1$ and $ \theta=-1$  we have an involution.
 \item[3.)] If $\alpha=1,\beta=\omega$, $\theta=\omega^2$ with $\omega$ a cube root of unity then we have a matrix of order $3$ (i.e $A^3=I$).
 \item[4.)] If $\alpha=0,\beta=1,\theta=-1$ then we have a 3-potent matrix. ($A^3=A$).
  \end{itemize}
 
The propose of our article is compute the number of matrices with prescribed eigenvalues, for example, the number of involutions, the number of $k$-potent matrices, i.e. $A^k=A$, and the number of matrices of finite order $k$, i.e. $A^k=I$ with $I$ the identity in $R$ and other special matrices.

The main results of our article is stated as follows:

\begin{theorem}\label{th1} Let $\alpha_1,\alpha_2,\cdots,\alpha_k \in F$ be all different numbers, $R=M_n(F)$ the $n\times n$ matrix group  and define the subsets 
$$M(\alpha_1,\alpha_2,\cdots\!,\alpha_k)\!=\!\{A\in R,  if \ \gamma \ is \ an \ eigenvalue \ of \ A \ then \ \gamma\in \!\{  \alpha_1,\alpha_2, \cdots,\alpha_k\} \},$$
 and 
 $$E(\alpha_1,\alpha_2,\cdots, \alpha_k)=\{A \in R,  \alpha_1,\alpha_2,\cdots \ldots, \alpha_k \ are  \ eigenvalues \ of \ A\}.$$
Then
 $$|M(\alpha_1,\alpha_2,\cdots,\alpha_k)|=\sum_{\substack{n_1+n_2+\cdots+n_k=n\\ n_i\geq 0, i=1,2,\cdots,k}}\frac{U}{U_{n_1}U_{n_2}\cdots U_{n_k}}.$$
 And 
 $$|E(\alpha_1,\alpha_2,\cdots \alpha_k)|=\sum_{\substack{n_1+n_2+\cdots n_k=n\\ n_i \geq 1, i=1,2,\cdots,k}}\frac{U}{U_{n_1}U_{n_2}\cdots  U_{n_k}},$$
 where $U=|U_n(M_n(F))|$ and $U_{n_j}=|U(M_{n_j}(F))|$ are the number of units of $M_n(F)$ and $M_{j}(F)$ for $j=1,2,\cdots,k$, respectively, and by convention, $U_0=1$.
  \end{theorem}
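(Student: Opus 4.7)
The plan is to set up a bijection between $M(\alpha_1,\ldots,\alpha_k)$ and the collection of ordered direct-sum decompositions $F^n = V_1 \oplus \cdots \oplus V_k$ of $F^n$, and then count those decompositions with each prescribed dimension vector by an orbit--stabilizer argument.

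First, I would identify each $A \in M(\alpha_1,\ldots,\alpha_k)$ with its tuple of eigenspaces $V_i := \ker(A - \alpha_i I)$. Since the $\alpha_i$ are pairwise distinct, eigenvectors attached to different $\alpha_i$ are linearly independent, so $V_1 + \cdots + V_k = V_1 \oplus \cdots \oplus V_k$; the assumption that every eigenvalue of $A$ lies in $\{\alpha_1,\ldots,\alpha_k\}$ (equivalently, that the minimal polynomial of $A$ divides the separable polynomial $\prod_i (x - \alpha_i)$, as in all the motivating examples) then forces $F^n = V_1 \oplus \cdots \oplus V_k$. Conversely, any ordered decomposition $F^n = V_1 \oplus \cdots \oplus V_k$ (allowing $V_i = \{0\}$) defines a unique operator $A$ by the rule $A|_{V_i} = \alpha_i \cdot \mathrm{id}$, giving the bijection in both directions.

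Second, I would partition the right-hand side by the dimension vector $(n_1,\ldots,n_k)$ with $n_i = \dim V_i$, so that $\sum_i n_i = n$ and $n_i \geq 0$. For a fixed dimension vector, the group $GL_n(F) = U(M_n(F))$ acts by change of basis on the set of ordered direct-sum decompositions with these dimensions; the action is transitive, and the stabilizer of the canonical decomposition into standard coordinate subspaces is precisely the block-diagonal subgroup isomorphic to $\prod_{i=1}^k GL_{n_i}(F)$. Orbit--stabilizer therefore produces exactly
$$\frac{|GL_n(F)|}{\prod_{i=1}^k |GL_{n_i}(F)|} = \frac{U}{U_{n_1} U_{n_2} \cdots U_{n_k}}$$
such decompositions, where the convention $U_0 = 1$ seamlessly absorbs any summand $V_i = \{0\}$.

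Third, summing over all compositions $n_1 + \cdots + n_k = n$ with $n_i \geq 0$ yields the formula for $|M(\alpha_1,\ldots,\alpha_k)|$. For $E(\alpha_1,\ldots,\alpha_k)$, I would note that $\alpha_i$ actually occurs as an eigenvalue of $A$ if and only if $V_i \neq \{0\}$, i.e.\ $n_i \geq 1$; restricting the same sum to compositions with $n_i \geq 1$ gives the second formula. The main obstacle is the orbit--stabilizer step: cleanly verifying transitivity of $GL_n(F)$ on ordered decompositions of a fixed shape, identifying the stabilizer as the block-diagonal copy of $\prod GL_{n_i}(F)$, and confirming that the boundary case $n_i = 0$ is handled by the convention $U_0 = 1$ rather than silently breaking the formula.
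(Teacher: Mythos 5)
Your argument is essentially the paper's own proof in different clothing: the paper conjugates the diagonal representatives $T_{n_1,\ldots,n_k}$ and computes the centralizer in $U(R)$ as the block-diagonal subgroup $\prod_{i=1}^k U(M_{n_i}(F))$, which is exactly your orbit--stabilizer count of ordered decompositions of shape $(n_1,\ldots,n_k)$, so both yield $U/\prod_i U_{n_i}$ per class and the same sums. One caveat you share with the paper: identifying $M(\alpha_1,\ldots,\alpha_k)$ with diagonalizable matrices is not automatic from the stated definition (a nontrivial Jordan block for $\alpha_1$ has all its eigenvalues in the set, yet its eigenspaces do not span $F^n$), so your parenthetical ``equivalently'' is really the extra hypothesis that both proofs silently need.
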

  
 In the section 3 we use the Theorem \ref{th1}  for proof the following upper bound  that generalize the MacHale's results (see \cite{Machale}) for $(k+1)$-potent elements over finite rings  :
 
 \begin{theorem}\label{th2}
Let $p$ be a prime and $R$ be a finite ring such that $|R|$ is some power of $p$. Then
$$\mathcal{I}_{k+1}(R)|\leq \displaystyle\frac{k+1}{p} |R|^{\displaystyle\frac{2k}{k+1}},$$
where $\mathcal{I}_{k+1}$ denote the set of $(k+1)$-potent elements in $R$, $k\geq 1$.
\end{theorem}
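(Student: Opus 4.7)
The strategy is to reduce the count of $(k+1)$-potent elements to a count of ordered orthogonal idempotent decompositions of $1_R$, and then apply Theorem \ref{th1} through the Wedderburn decomposition of the semisimple quotient $R/J(R)$.

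First I would establish the bijection: assuming $\gcd(k,p)=1$, the polynomial $X^{k+1}-X=X\prod_{i=1}^{k}(X-\alpha_i)$ has $k+1$ distinct roots over the splitting field, so the Chinese Remainder Theorem supplies a unique decomposition $x=\sum_{i=0}^{k}\alpha_ie_i$ for every $(k+1)$-potent $x\in R$, in which $(e_0,\ldots,e_k)$ are pairwise orthogonal idempotents summing to $1_R$ and $\alpha_0=0$. Since idempotents lift uniquely through $J(R)$, the count is preserved after passing to $\bar R=R/J(R)$, which by Wedderburn--Artin decomposes as $\bar R\cong\prod_j M_{n_j}(\mathbb{F}_{q_j})$.

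Next I would apply Theorem \ref{th1} on each Wedderburn factor: the count of orthogonal decompositions is multiplicative across factors, and on $M_n(\mathbb{F}_q)$ Theorem \ref{th1} gives $\sum_{n_1+\cdots+n_{k+1}=n}\frac{|GL_n(\mathbb{F}_q)|}{\prod_i|GL_{n_i}(\mathbb{F}_q)|}$. Bounding each summand by $Cq^{n^2-\sum_i n_i^2}$ for some universal constant $C$, and using the Cauchy--Schwarz estimate $\sum_i n_i^2\geq n^2/(k+1)$, each term is at most $Cq^{kn^2/(k+1)}$. Summing over the $\binom{n+k}{k}$ compositions and multiplying across Wedderburn factors, together with $\prod_jq_j^{n_j^2}=|\bar R|\leq|R|$, delivers a bound of the form $P(k)\,|R|^{k/(k+1)}$ for some polynomial $P(k)$; this is comfortably dominated by $\frac{k+1}{p}|R|^{2k/(k+1)}$ once $|R|$ exceeds a small base case, which is handled directly.

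The main obstacle is the case $\gcd(k,p)>1$, where $X^{k+1}-X$ has repeated roots and the CRT decomposition degenerates. In that regime a $(k+1)$-potent element is no longer determined by a clean orthogonal idempotent tuple, and one must either adapt MacHale's original pairing $e\mapsto 1_R-2e$ to a suitable symmetry of the extended root set, or fall back on a direct embedding $R\hookrightarrow M_{\log_p|R|}(\mathbb{F}_p)$ combined with Theorem \ref{th1} applied to the ambient matrix ring. A secondary subtlety is that the $\alpha_i$ may lie in a proper extension of $\mathbb{F}_{q_j}$, which requires grouping roots into Galois orbits and restricting Theorem \ref{th1} to the invariant decompositions. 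The recovery of the precise prefactor $\frac{k+1}{p}$ (as opposed to some weaker $(k+1)$-dependent constant) is the most delicate part of the argument.
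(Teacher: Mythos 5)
Your overall architecture (pass to $\bar R=R/J$, invoke Wedderburn--Artin, apply Theorem \ref{th1} on each matrix factor, and bound the resulting sum term by term using $\sum_i n_i^2\geq n^2/(k+1)$) is essentially the route the paper takes. But there is a genuine gap at the reduction step: you assert that idempotents lift \emph{uniquely} through $J(R)$ and hence that the count of $(k+1)$-potents is preserved under $R\to R/J$. That is false. Idempotents lift modulo a nil ideal, and the lift is unique only up to conjugation, not on the nose: already for $R=T_2(\mathbb{F}_p)$ (upper triangular $2\times 2$ matrices) one has $2+2p$ idempotents in $R$ but only $4$ in $R/J\cong\mathbb{F}_p\times\mathbb{F}_p$. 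So a bound on $|\mathcal{I}_{k+1}(\bar R)|$ does not bound $|\mathcal{I}_{k+1}(R)|$. The correct (and trivial) statement is that each fiber of $\mathcal{I}_{k+1}(R)\to\mathcal{I}_{k+1}(\bar R)$ lies in a coset of $J$ and so has at most $|J|$ elements, giving $|\mathcal{I}_{k+1}(R)|\leq |J|\cdot|\mathcal{I}_{k+1}(\bar R)|$. The extra factor $|J|=p^l$ is not cosmetic: it is precisely what forces the exponent $\frac{2k}{k+1}$ in the statement rather than the $\frac{k}{k+1}$ your estimate produces on the semisimple quotient, and absorbing it requires writing $|R|=p^{l+\sum_i n_i^2t_i}$ and checking that $l$ fits under the enlarged exponent, which is how the paper closes the argument.

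Beyond that, the proposal is incomplete by its own admission on exactly the points where the work lies. You flag, but do not resolve, the cases $\gcd(k,p)>1$ and eigenvalues lying outside $\mathbb{F}_{q_j}$, and you explicitly defer the derivation of the prefactor $\frac{k+1}{p}$. Also, the combinatorial prefactor $\binom{n+k}{k}$ is not a ``polynomial $P(k)$'': it grows with $n$, so it must be traded against a power of $q$ (the paper does this via $\sum_s N(s)\leq(k+1)(n-1)^{k+1}\leq(k+1)^n\leq(k+1)q^{n-1}$, using $k+1\leq q$, which holds when $F$ contains a primitive $k$th root of unity). As written, the proposal establishes at best a bound of the form $C(n,k)\,|\bar R|^{k/(k+1)}$ for the semisimple quotient, not the stated inequality for $R$.
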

 
 And in the general case
 
 \begin{theorem}\label{th3}
 If $R$ is a finite ring and $p_i$, $1\leq i \leq n$, are the different primes dividing $|R|$, then
$$|\mathcal{I}_{k+1}(R)|\leq \displaystyle \frac{(k+1)^n}{\prod^n_{i=1}p_i}|R|^{\displaystyle \frac{2k}{k+1}},$$
where $\mathcal{I}_{k+1}(R)$ denote the set of $(k+1)$-potent elements in $R$.
\end{theorem}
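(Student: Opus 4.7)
The plan is to reduce Theorem~\ref{th3} to the prime-power case already handled by Theorem~\ref{th2} via the Sylow-like decomposition of a finite ring. Since the additive group $(R,+)$ is a finite abelian group, it splits canonically as a direct sum of its $p_i$-primary components $R = R_1 \oplus \cdots \oplus R_n$, where $R_i = \{x \in R : p_i^{m} x = 0 \text{ for some } m \geq 0\}$ and $|R_i|$ is a power of $p_i$. The first step is to upgrade this additive decomposition to a ring decomposition $R \cong R_1 \times \cdots \times R_n$.

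To do this, I would verify that each $R_i$ is a two-sided ideal: if $x \in R_i$ is annihilated by $p_i^m$, then so is $rx$ and $xr$ for any $r \in R$, forcing $rx, xr \in R_i$. Moreover, since $R_i \cap R_j = 0$ for $i \neq j$ and the $R_i$ are ideals, one has $R_i R_j \subseteq R_i \cap R_j = 0$. Therefore, writing $a = a_1 + \cdots + a_n$ with $a_i \in R_i$, the cross terms vanish and
\[
a^{k+1} = a_1^{k+1} + \cdots + a_n^{k+1}.
\]
Because the sum $R = R_1 \oplus \cdots \oplus R_n$ is direct, the equation $a^{k+1} = a$ is equivalent to $a_i^{k+1} = a_i$ for every $i$. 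This gives a bijection
\[
\mathcal{I}_{k+1}(R) \;\longleftrightarrow\; \mathcal{I}_{k+1}(R_1) \times \cdots \times \mathcal{I}_{k+1}(R_n),
\]
and hence $|\mathcal{I}_{k+1}(R)| = \prod_{i=1}^{n} |\mathcal{I}_{k+1}(R_i)|$.

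Now each $R_i$ is a finite ring of prime-power order $|R_i|$ with only prime $p_i$ dividing its cardinality, so Theorem~\ref{th2} applies and yields
\[
|\mathcal{I}_{k+1}(R_i)| \leq \frac{k+1}{p_i}\,|R_i|^{\frac{2k}{k+1}}.
\]
Taking the product over $i$ and using $\prod_i |R_i| = |R|$ gives
\[
|\mathcal{I}_{k+1}(R)| \leq \prod_{i=1}^{n} \frac{k+1}{p_i}\,|R_i|^{\frac{2k}{k+1}} = \frac{(k+1)^n}{\prod_{i=1}^n p_i}\,|R|^{\frac{2k}{k+1}},
\]
which is the desired bound.

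The main obstacle is the decomposition step, and in particular the implicit assumption that $R$ possesses a multiplicative identity (or at least that the cross-product terms $R_i R_j$ indeed vanish). The argument $R_i R_j \subseteq R_i \cap R_j$ uses only the ideal property and directness of the sum, so it works even without an identity; what the identity really buys is that each $R_i$ is itself a unital ring, allowing a clean application of Theorem~\ref{th2}. If the ambient ring lacks an identity, one should verify that the proof of Theorem~\ref{th2} still goes through on each $R_i$, or restrict the statement to rings with identity as is standard in this line of work.
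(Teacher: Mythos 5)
Your proposal is correct and follows essentially the same route as the paper: decompose $R$ as a product of its primary components of prime-power order, observe that $|\mathcal{I}_{k+1}(R)|=\prod_i|\mathcal{I}_{k+1}(R_i)|$, and apply Theorem~\ref{th2} factorwise. You merely supply the details (ideal property of the components, vanishing of cross terms) that the paper delegates to a remark in the cited reference.
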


\section{Counting Matrices with prescribed eigenvalues}
 
 Remember that two matrices $A$ and $B$ are similar if there exists an invertible matrix $P$ such that $B=PAP^{-1}$. Fillmore in \cite{Fillmore} shows that two matrices are similar if and only if both matrices have the same trace, using this result we enunciate the following result: 

\begin{corollary} 
Let $A,B$ be diagonal matrices in $R$ with entries $\alpha_1,\alpha_2,\cdots, \alpha_k \in F$ all different numbers. Then $A$ and $B$ are similar if and only if they have the same number of $\alpha_i's$ on their diagonals, for $i=1,2,\cdots,k$. \end{corollary}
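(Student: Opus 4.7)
The plan is to prove the two implications separately, leaning on the fact that for diagonal matrices the assignment of eigenvalues to positions on the diagonal is completely transparent.

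For the forward implication, I would argue that if $A$ and $B$ are similar then they share the same characteristic polynomial, so they share the same multiset of eigenvalues (counted with algebraic multiplicity). Since $A$ and $B$ are diagonal with entries drawn from the pairwise distinct set $\{\alpha_1,\ldots,\alpha_k\}$, the algebraic multiplicity of $\alpha_i$ as an eigenvalue of $A$ equals the number of times $\alpha_i$ appears on the diagonal of $A$, and similarly for $B$. Hence the diagonals of $A$ and $B$ contain each $\alpha_i$ the same number of times. (One could alternatively invoke the trace-type criterion mentioned just above after first reducing to blocks with a single eigenvalue, but the characteristic-polynomial argument is cleaner.)

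For the reverse implication, suppose $A$ and $B$ both have $n_i$ copies of $\alpha_i$ on the diagonal, for $i=1,2,\ldots,k$, with $n_1+\cdots+n_k=n$. Then there is a permutation $\sigma\in S_n$ sending the positions of $\alpha_i$'s on the diagonal of $A$ to the positions of $\alpha_i$'s on the diagonal of $B$, for every $i$. Let $P=P_\sigma$ be the corresponding permutation matrix; then $P$ is invertible, $P^{-1}=P^{\top}$, and conjugation by a permutation matrix permutes the diagonal entries of a diagonal matrix according to $\sigma$. By construction $PAP^{-1}=B$, so $A$ and $B$ are similar.

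I do not expect any serious obstacle: the only point requiring some care is making the permutation-matrix conjugation rigorous (i.e.\ checking that $(P_\sigma D P_\sigma^{-1})_{ii}=D_{\sigma^{-1}(i),\sigma^{-1}(i)}$ for a diagonal $D$), but this is a routine computation. The proof is therefore short, and the corollary will then be used in Section~2 to count similarity classes of diagonal matrices via ordered partitions $n_1+\cdots+n_k=n$, which is exactly the combinatorial shape appearing in Theorem~\ref{th1}.
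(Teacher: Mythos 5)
Your proposal is correct and follows essentially the same route as the paper: the forward direction via equality of eigenvalue multiplicities, and the reverse direction via conjugation by a permutation matrix. The only cosmetic difference is that the paper builds the permutation as a product of transpositions that sort both matrices to a common canonical diagonal form, whereas you conjugate $A$ directly to $B$ with a single permutation matrix $P_\sigma$ — the same underlying argument.
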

\begin{proof}
Is well-know that, if $A$ and $B$ are similar matrices then they are the same number of eigenvalues, so, they are the same number of $\alpha_i's$.

Conversely, let $A=diag(a_1,a_2,\cdots,a_n)$ and $B=diag(b_1,b_2,\cdots,b_k)$, where $a_j,b_j \in\{\alpha_1,\alpha_2,\cdots,\alpha_k\}$, and denote by $r_i$ and $s_i$ the number of $\alpha_i$ on the diagonals of $A$ and $B$, respectively. Without loss of generality, we will consider that $a_1=\alpha_1$. Let $j_1$ the index such that $b_{j_1}=\alpha_1$ and $b_s\neq \alpha_1$ for all $s\in \{1,2,j_1-1\}$. Consider the transposition $(1,j_1)$ in the symmetric group $S_n$ and let $P$ the elementary matrix that permutes the rows $1$ and $j_1$ in $I_n$, the identity matrix in $M_n(F)$. Then, we have that $P=P^{-1}$ and $P^{-1}BP$ permutes $b_1$ with $b_{j_1}$, so, $P^{-1}BP=diag(\alpha_1,b_2,\cdots,b_{j_1-1},b_1,\cdots,b_n)$. Since $r_1=s_1$, repeat this process if necessary we obtain that $B$ is similar to $diag(\alpha_1,\cdots,\alpha_1,b_{r_1+1},\cdots,b_k)$. The same algorithm can be applied for $\alpha_2,\alpha_3,\cdots,\alpha_k$ and, from here we proof that $A$ and $B$ are similar.
\end{proof}

 Denote by $R=M_n(F)$ and by $U(R)$ the set of units in $R$ and by $U=|U(R)|$. Consider $\alpha_1,\alpha_2,\cdots ,\alpha_k \in F$ all different numbers and define the set 
 $$M(\alpha_1,\alpha_2,\cdots\!,\alpha_k)\!=\!\{A\in R,  if \ \gamma \ is \ an \ eigenvalue \ of \ A \ then \ \gamma\in \!\{  \alpha_1,\alpha_2, \cdots,\alpha_k\} \},$$
 and 
 $$E(\alpha_1,\alpha_2,\cdots, \alpha_k)=\{A \in M_n(F),  \alpha_1,\alpha_2,\cdots \  and \ \alpha_k \ are \ eigenvalues \ of \ A\}.$$
 Observe that $E(\alpha,\beta,\theta)\subset M(\alpha,\beta,\theta).$
 
 Now, we proof the main Theorem:
 
 \begin{proof}[Proof of Theorem \ref{th1}]
 Denote by
 $$N(s)=|\{(n_1,n_2,\cdots,n_s), \ such \ that \ n_1+n_2+\cdots+n_s=n, n_i\geq 0.\}|.$$
 Then the set $T=\{diag(a_1,a_2,\cdots,a_n), \ a_i \in \{\alpha_1,\alpha_2,\cdots, \alpha_k\} \}$ has $N(k)$ similarity classes. Denote by
 $$T_{n_1,n_2,\cdots,n_k}=diag(\underbrace{\alpha_1,\alpha_1,\cdots,\alpha_1}_{n_1 \ times},\underbrace{\alpha_2,\cdots,\alpha_2}_{n_2 \ times},\cdots,\underbrace{\alpha_k,\cdots,\alpha_k}_{n_k \ times}) \ \ \ (1)$$
 with $n_1+\cdots+n_k=n$, a set of representatives for these classes (observe here, that, if $n_i=0$ then $\alpha_i$ not appears in $T_{n_1,\cdots,n_{i-1},0,n_{i+1},\cdots,n_k}$). The equivalence classes are given by 
 $$[T_{n_1,\cdots,n_k}]=\{PT_{n_1,\cdots,n_k}P^{-1}, \ P\in U(R)\},$$
 where $n_1+\cdots+n_k=n$. Now we compute $|[T_{n_1,\cdots,n_k}]|$ to fixed  $(n_1,n_2,\cdots,n_k)$. Observe that $P_1T_{n_1,\cdots,n_k}P_1^{-1}=P_2T_{n_1,\cdots,n_k}P_2^{-1}$ iff $P_2^{-1}P_1\in C_{U(R)}(T_{n_1,\cdots,n_k})$, the centralizer of $T_{n_1,\cdots,n_k}$ in the group $U(R)$. Denote by $C_{n_1,\cdots,n_k}=|C_{U(R)}(T_{n_1,\cdots,n_k})|$ then we obtain
 $$|[T_{n_1,\cdots,n_k}]|=\frac{U}{C_{n_1,\cdots,n_k}}.$$
 Now, we compute $C_{n_1,\cdots,n_k}$. Let $T_{n_1,n_2,\cdots,n_k}$  be as (1)
 and assume that $A=(a_{ij})\in C_{U(R)}(T_{n_1,\cdots,n_k})$, then
 $$T_{n_1,\cdots,n_k}A=AT_{n_1,\cdots,n_k},$$
 and from here, we shown that $$A=\left[\begin{array}{cccc}A_1 & 0 & \cdots & 0 \\ 0 & A_2 & \cdots & 0 \\
 \vdots & &\ddots & \\ 0 & 0 & \cdots & A_k\end{array}\right],$$
 where $A_i\in U(M_{n_i}(F))$ for all $i=1,2,\cdots,k$. The converse of these affirmation is obviously true, then conclude that
 $$C_{n_1,\cdots,n_k}=\prod^k_{i=1}|U(M_{n_i}(F)|.$$
 Therefore, we have 
 $$|M(\alpha_1,\alpha_2,\cdots,\alpha_k)|= \sum_{\substack{n_1+n_2+\cdots+n_k=n \\ n_i \geq 0, \ i=1,2,\cdots, k}} \frac{U}{\prod^k_{i=1} U_{n_i}},$$
 The proof is similar to $|E(\alpha_1,\alpha_2,\cdots,\alpha_k)|$.
 \end{proof}
And, by the above Theorem we enunciate the following interesting result
\begin{corollary} Let $F$ be a finite field and $R=M_n(F)$ the $n\times n$ matrix group. Then  the number of idempotent matrices is equal to the number of involution matrices. Also, the number of $k$-potent matrices is equal to the number of matrices of finite order $k$.
\end{corollary}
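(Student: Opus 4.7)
The plan is to apply Theorem~\ref{th1} to each of the two equalities, leveraging the following key observation: the right-hand side of the formula in Theorem~\ref{th1} depends only on $n$, on $q=|F|$ (through $U$ and the $U_{n_j}$), and on the integer $k$, but not on the specific choice of the distinct eigenvalues $\alpha_1,\dots,\alpha_k \in F$. Hence any two classes of matrices that can each be realized as a set $M(\alpha_1,\dots,\alpha_k)$ for the same $k$ automatically have the same cardinality.

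For the first equality, I would show that the set of idempotents coincides with $M(0,1)$, and (assuming $\mathrm{char}\,F \neq 2$) that the set of involutions coincides with $M(1,-1)$. Indeed, $A^2=A$ is equivalent to the minimal polynomial of $A$ dividing $x(x-1)$, which, having distinct roots in $F$, forces $A$ to be diagonalizable with eigenvalues in $\{0,1\}$, and the converse is clear. The analogous argument with $(x-1)(x+1)$ handles involutions. Both descriptions use $k=2$, so Theorem~\ref{th1} gives equal totals.

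For the second equality, a $k$-potent matrix ($A^k=A$) satisfies $x(x^{k-1}-1)=0$, while a matrix of finite order $k$ ($A^k=I$) satisfies $x^k-1=0$. Under the assumption that $F$ contains the required roots of unity and that $\mathrm{char}\,F$ does not divide the relevant exponent, each polynomial splits into exactly $k$ distinct linear factors over $F$, so each of these two classes is realized as an $M(\alpha_1,\dots,\alpha_k)$ set with the same integer $k$. A second application of Theorem~\ref{th1} yields the claimed equality.

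The main obstacle, really more of a caveat, is the hypothesis on $F$: to identify each of the four classes as an $M(\alpha_1,\dots,\alpha_k)$ set one needs the associated polynomial to split into distinct linear factors over $F$, which in particular rules out the characteristic-$2$ case for involutions. Once this is arranged, the corollary follows immediately, since the enumeration formula in Theorem~\ref{th1} is symmetric in the $\alpha_i$'s and indifferent to their specific values; no new computation is required beyond recognizing each matrix class as the preimage of a common combinatorial expression in $n$, $q$, and $k$.
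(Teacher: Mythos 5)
Your proof is correct and is essentially the paper's argument: the paper's entire proof is ``Follows immediately from Theorem \ref{th1},'' and your write-up is exactly the intended elaboration --- identify each class with a set $M(\alpha_1,\dots,\alpha_k)$ via the minimal polynomial and observe that the counting formula depends only on $n$, $q$, and the number $k$ of prescribed eigenvalues, not on their values. You also rightly flag the hypotheses the paper leaves implicit (the relevant polynomials must split into distinct linear factors over $F$, which excludes characteristic $2$ for involutions and requires the appropriate roots of unity for the $k$-potent/finite-order case), and your minimal-polynomial characterization correctly matches the diagonalizable matrices that the proof of Theorem \ref{th1} actually enumerates.
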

\begin{proof}
Follows immediately from Theorem \ref{th1}.
\end{proof}
\begin{remark}
Furthermore, we proof that the quantity of idempotent matrices are equals to the number of all matrices with two different eigenvalues fixed (not necessarily, $0$ and $1$) and this is valid for any subset of specially eigenvalues (for example, eigenvalues of one $k$-potent matrix). The Theorem $\ref{th1}$ is a result that the authors suspected in the articles of Slowik \cite{Slowik}, Hou \cite{Hou} and Gargate \cite{Gargate3} but for full matrix group. 
\end{remark}
\section{$(k+1)$-potent elements over finite rings}
MacHale in \cite{Machale} found an upper bound for the number of idempotents of a finite ring $R$. Cheraghpour and Ghosseiri in \cite{Ghosseiri} found a smaller bound and they shown that 
$$|\mathcal{I}(R)|\leq \frac{2^n}{\prod^n_{i=1}p_i}|R|$$
where $p_1,p_2,\cdots,p_n$ are different primes dividing $|R|$ and $\mathcal{I}(R)$ denotes the set of all idempotent elements in $R$.
In this section we found an similar upper bound for the number of $(k+1)$-potent elements in $R$. Recall that an element $x\in R$ is called $(k+1)$-potent if and only if  $x^{k+1}=x$. First, we proof the following lemma:
\begin{lemma}\label{lema1}
Let $F$ be a finite field with $|F|=q$ and suppose that $\omega$ is an arbitrary $k$th root of unity in $F$, $n\geq 1$ and let $R=M_n(F)$ the $n\times n$ matrix group. Then, for $1 \leq k\leq n$, we have that
$$|\mathcal{I}_{k+1}(R)|\leq (k+1)q^{\displaystyle\frac{2n^2k}{k+1}-1}$$
where, $\mathcal{I}_{k+1}(R)$ denote the set of $(k+1)$-potent matrices.

\end{lemma}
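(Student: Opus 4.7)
The plan is to invoke Theorem~\ref{th1} with the eigenvalue set $\Lambda=\{0,1,\omega,\omega^2,\ldots,\omega^{k-1}\}$, consisting of $k+1$ distinct elements of $F$ (reading the hypothesis that a $k$th root of unity $\omega\in F$ exists as implicitly asking that these powers be distinct, i.e.\ that $\omega$ be primitive). First I would verify that every $(k+1)$-potent $A\in R$ has spectrum in $\Lambda$ and is diagonalizable over $F$: since $A^{k+1}=A$, the minimal polynomial of $A$ divides the separable polynomial $x^{k+1}-x=x\prod_{j=0}^{k-1}(x-\omega^{j})$, which splits completely over $F$. Theorem~\ref{th1} then gives
\[
|\mathcal{I}_{k+1}(R)| = \sum_{\substack{n_0+n_1+\cdots+n_k=n\\ n_i\geq 0}} \frac{U}{U_{n_0}U_{n_1}\cdots U_{n_k}}.
\]

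Next I would estimate each summand. From the explicit formula $|GL_m(F)|=q^{m^2}\prod_{j=1}^{m}(1-q^{-j})$ one obtains both $|GL_m(F)|\leq q^{m^2}$ and, uniformly in $q\geq 2$ and $m\geq 0$, a matching lower bound $|GL_m(F)|\geq c\,q^{m^2}$ with $c=\prod_{j=1}^{\infty}(1-2^{-j})>0$. Consequently $U/\prod U_{n_i}\leq c^{-(k+1)}q^{n^2-\sum n_i^2}$. The power-mean inequality
\[
n_0^2+n_1^2+\cdots+n_k^2 \geq \frac{n^2}{k+1},
\]
valid for any $k+1$ nonnegative integers summing to $n$, yields the per-term bound $\leq C(k)\,q^{n^2k/(k+1)}$ for an explicit constant $C(k)$ depending only on $k$.

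Finally I would sum over the $\binom{n+k}{k}$ compositions to get $|\mathcal{I}_{k+1}(R)| \leq C(k)\binom{n+k}{k}\,q^{n^2k/(k+1)}$. The target exponent $2n^2k/(k+1)-1$ is nearly double what the naive estimate produces, so the excess factor $q^{n^2k/(k+1)-1}$ easily absorbs $\binom{n+k}{k}$ together with $C(k)$ once $n$ is not too small; here the hypothesis $1\leq k\leq n$ is used to keep the combinatorial overhead polynomial in $n$.

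I expect the main obstacle to be the boundary case $n=k=1$, where the claim is exactly tight: $|\mathcal{I}_2(F)|=|\{0,1\}|=2=(k+1)q^{2n^2k/(k+1)-1}$. The rough estimates above lose constant factors and thus have no slack in this corner. The way around this is to invoke the sharper identity $U/\prod U_{n_i}=q^{(n^2-\sum n_i^2)/2}\binom{n}{n_0,\ldots,n_k}_{q}$ obtained from $|GL_m(F)|=q^{\binom{m}{2}}\prod_{i=1}^{m}(q^{i}-1)$ and the standard bound $\binom{n}{m}_{q}\leq q^{m(n-m)}\prod_{j\geq 1}(1-q^{-j})^{-1}$ before summing, which is tight in the small-parameter regime. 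The cleanest write-up would therefore split into a routine main estimate for $n\geq 2$ and a short direct verification of the single edge case $n=k=1$.
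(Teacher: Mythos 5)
Your route is essentially the paper's: identify $\mathcal{I}_{k+1}(R)$ with the set counted by Theorem~\ref{th1} for the eigenvalue set $\{0,1,\omega,\dots,\omega^{k-1}\}$, bound each summand $U/\prod_i U_{n_i}$ by a power of $q$, optimize that exponent over compositions of $n$, and multiply by the number of compositions. (Your observation that $x^{k+1}-x$ is separable and splits over $F$, so that the $(k+1)$-potent matrices are exactly the diagonalizable ones with spectrum in that set, is a justification the paper skips and is worth keeping.) The difference is in the estimation. You use the two-sided bound $c\,q^{m^2}\le U_m\le q^{m^2}$ together with the power-mean inequality, which yields the exponent $n^2k/(k+1)$ at the cost of a multiplicative constant $C(k)=c^{-(k+1)}$. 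The paper instead keeps the exact product formula, so each summand is bounded by $q^{2\sum_{i<j}n_in_j+\sum_{i\ge 2}n_i}$ with no constant loss, maximizes that quadratic by Lagrange multipliers to get $q^{(s-1)(2n+1)^2/(4s)}$, and then absorbs the number of compositions through $\sum_s N(s)\le (k+1)^n\le (k+1)q^{n-1}$, using $k+1\le q$ (which holds because $k\mid q-1$). That last device --- converting the combinatorial overhead into a power of $q$ rather than a constant --- is what your plan is missing.

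The genuine gap is in your endgame. The inequality you need, $C(k)\binom{n+k}{k}\le (k+1)\,q^{n^2k/(k+1)-1}$, fails for a whole range of small parameters, not only at $n=k=1$: for $n=2$, $k=1$, $q=2$ the left side is about $c^{-2}\cdot 3\approx 36$ while the right side is $2q=4$. So there is no ``routine main estimate for $n\ge 2$''; with your constants the argument only closes once $n$ is on the order of $8$ or more. Your proposed repair via $\binom{n}{m}_q\le q^{m(n-m)}\prod_{j\ge1}(1-q^{-j})^{-1}$ shrinks the loss to $c^{-k}\binom{n+k}{k}$ but still fails at, e.g., $n=2$, $k=1$, $q\le 5$ (it demands $2q\ge 3c^{-1}\approx 10.4$), even though the lemma is true there, since $|\mathcal{I}_2(M_2(F))|=q^2+q+2\le 2q^{3}$. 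To close the argument you need either the paper's trick $k+1\le q$ to trade the composition count for a power of $q$, or a summation that does not bound every composition by the maximal one (only the nearly balanced compositions come close to the exponent $n^2k/(k+1)$; the rest are smaller by integer powers of $q$), or explicit verification over the finitely many small $(n,k,q)$ that survive the crude bound. Be warned that this regime is genuinely delicate: the paper's own closing exponent comparisons are also tight or worse for small $n$, so you should not expect it to be dispatchable in a line.
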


\begin{proof} For $k=1$ we need compute all idempotent matrices and for this see \cite{Ghosseiri}. Suppose that $k>1$ and remember that, if $A$ is a $(k+1)$-potent matrix and if $\gamma$ is one eigenvalue then $\gamma \in \{0,1,\omega,\cdots,\omega^{k-1}\}$ where $\omega$ is a $k$th root of unity. So, to compute all $(k+1)$-potent matrices we observe that $\mathcal{I}_{k+1}(R)=M(0,1,\omega,\cdots,\omega^{k-1})$ and by the Theorem \ref{th1} we have that
$$\begin{array}{rl} |I_{k+1}(R)|=\!\!&\!\!\!\displaystyle\sum_{\substack{n_1+n_2+\cdots+n_{k+1}=n \\ n_i\geq 0, \ i=1,2,\cdots,k+1}} \frac{U_n}{\prod_{i=1}^{k+1}U_{n_i}}\\ 
=\!\!& \!\!\!\displaystyle \sum^{k+1}_{s=1}\sum_{\substack{n_1+\cdots+ n_{s}=n \\ n_i \geq 1,i=1,\!\cdots\!,s}}  \frac{q^{\binom{n}{2}}(q\!-\!1)\!\cdots\! (q^n-1)}{q^{\binom{n_1}{2}}(q\!-\!1)\!\cdots\!(q^{n_1}\!-\!1)\cdots q^{\binom{n_{k+1}}{2}}
(q-1)\!\cdots\!(q^{n_{k+1}}\!-\!1)}
\end{array}$$
For instance, denote by
$$H(s)=\displaystyle\sum_{\substack{n_1+\cdots+ n_{s}=n \\ n_i \geq 1, \ i=1,2,\cdots,s}}  \frac{q^{\binom{n}{2}}(q-1)\cdots (q^n-1)}{q^{\binom{n_1}{2}}(q-1)\cdots(q^{n_1}-1)\cdots q^{\binom{n_s}{2}}
(q-1)\cdots(q^{n_s}-1)}$$
then

$$\begin{array}{rl}\frac{q^{\binom{n}{2}}(q-1)\cdots (q^n-1)}{q^{\binom{n_1}{2}}(q-1)\cdots(q^{n_1}-1)\cdots q^{\binom{n_s}{2}}
(q-1)\cdots(q^{n_s}-1)}=\!&\!\!\!\!q^{\displaystyle\sum^{s}_{i,j=1,i< j}\!\!\!n_in_j}\!\!\!\cdot\frac{(q^{n_1+1}-1)\cdots(q^n\!-\!1)}{(q\!-\!1)\!\cdots \!(q^{n_2}\!-\!1)\!\cdots\! (q\!-\!1)\!\cdots \!(q^{n_s}\!-\!1)}\\
\leq\! &\!\!\!\! q^{\displaystyle\sum^{s}_{i,j=1,i< j}\!\!\!n_in_j}\!\!\!\cdot \displaystyle\frac{ q^{n_1+1}\cdots q^{n}}{q\cdots q^{n_2-1} \cdots q \cdots q^{n_s-1}} \\
=\!&\!\!\!\! \displaystyle q^{\displaystyle\sum^{s}_{i,j=1,i< j}\!\!\!\!\!\!\!n_in_j}\!\!\!\cdot\! \frac{q^{(n_2+\cdots+n_s)n_1+ \frac{(n-n_1)(n-n_1+1)}{2}}}{q^{\frac{(n_2-1)n_2}{2}} \cdots q^{\frac{(n_s-1)n_s}{2}}}\\
=& q^{\displaystyle\sum^{s}_{i,j=1,i< j}2n_in_j+ \sum^s_{i=2}n_i}
\end{array}$$
Now, we analyzing the function
$$F(x_1,x_2,\cdots,x_{s-1},x_s)=\displaystyle\sum^{s}_{i,j=1,i< j}2x_ix_j+ \sum^s_{i=2}x_i$$
with restriction $x_1+x_2+\cdots+x_s=n$.
The critical points are
$$x_1=\frac{2n-s+1}{2s} \ and \  x_i=\frac{2n+1}{2s}, i=2,\cdots,s.$$
Its not difficult proof that $$max(F)=2\left(\frac{2n+1}{2s}\right)^2\frac{(s-1)s}{2}.$$
So,
$$H(s)\leq \sum_{\substack{n_1+\cdots+n_s=n \\ n_i\geq 1, i=1,2,\cdots,s}}q^{\displaystyle \frac{(s-1)(2n+1)^2}{4s}}=N(s)q^{\displaystyle \frac{(s-1)(2n+1)^2}{4s}},$$
where $N(s)$ denote the number of integer solutions of the equation $n_1+\cdots+n_s=n$ with $n_i\geq 1$, $i=1,\cdots,s$.
Therefore
$$|\mathcal{I}_k(R)|=\sum_{s=1}^{k+1}H(s).$$
From here, we observe that
$$\begin{array}{rl}|\mathcal{I}_k(R)|\leq & \displaystyle \sum^{k+1}_{s=1}N(s)q^{\displaystyle \frac{(s-1)(2n+1)^2}{4s}}\\
\leq & \left[\displaystyle\sum^{k+1}_{s=1}N(s)\right]q^{\displaystyle\frac{k(2n+1)^2}{4(k+1)}}\end{array}$$
Observe that $N(1)=1, N(2)=n-1$ and $N(s)\leq (n-1)^s$, for $s=3,4,\cdots,k+1$. So,
$$\displaystyle \sum^{k+1}_{s=1}N(s)\leq (k+1)(n-1)^{k+1}\leq (k+1)^n$$
because $n^k\leq k^n$ for $k=2,\cdots,n$. Therefore
$$\begin{array}{rl}|\mathcal{I}_k(R)|\leq & (k+1)^{n}q^{\displaystyle \frac{k(2n+1)^2}{4(k+1)}} \\
\leq & (k+1)q^{n-1}q^{\displaystyle \frac{k(2n+1)^2}{4(k+1)}}\\
\leq & (k+1)q^{-1}q^{\displaystyle \frac{n^2k+(2k+1)n+1}{k+1}}\\
\leq  &(k+1)q^{\displaystyle\frac{2n^2k}{k+1}-1}\end{array}$$
and this conclude our proof.
\end{proof}

Follows immediately, from Lemma \ref{lema1} the same result of \cite{Ghosseiri} for $k=1$. Now, for $R$, a finite ring, we denote by $J=rad(R)$ the Jacobson radical of $R$. By the Wedderburn-Artin Theorem and Wedderburn's Little Theorem \cite{Lam}, we have 
$$\bar{R}=R/J\cong M_{n_1}(F_1)\times \cdots \times M_{n_s}(F_s),$$
where $n_i\geq 1$ and $F_i$ is a finite field, for all $i=1,2,\cdots,s$. Remember that for every pair of rings $A,B$ we have $U(A\times B)=U(A)\times U(B)$, then by the above Lemma we proof the following Theorem that generalize the result of MacHale's for $(k+1)$-potent elements in a ring.

\begin{proof}[Proof of Theorem \ref{th2}]
We have $R/J \cong M_{n_1}(F_1)\times \cdots \times M_{n_s}(F_s).$ Let $|R|=p^r$, $|J|=p^l$ and $|F_i|=p^{t_i}$, $i=1,2,\cdots, s$. Since for any $(k+1)-$potent $e \in R$, $\bar{e}$ is  $(k+1)-$potent in $R/J$ and by the Lemma \ref{lema1} we have
$$\begin{array}{rl}|\mathcal{I}_{k+1}(R)|=&|J|\displaystyle\prod^s_{i=1}|I_{k+1}(M_{n_i}(F_i))|\\
\leq & p^l\displaystyle\prod ^s_{i=1}(k+1) |F_i|^{\displaystyle \frac{2n_i^2k}{k+1}-1} \\
\leq & p^l(k+1)^s \displaystyle \prod^s_{i=1} p^{\displaystyle \frac{2n_i^2kt_i}{k+1}-t_i} \\
=& \displaystyle\frac{(k+1)^s p^{\displaystyle\frac{\sum^s_{i=1} 2n_i^2kt_i}{k+1}+l}}{p^{\displaystyle\sum^s_{i=1}t_i}}\end{array}$$
Hence,
$$\begin{array}{rl}
|\mathcal{I}_{k+1}(R)|
\leq & \frac{\displaystyle(k+1)^sp^{\displaystyle\frac{2k}{k+1}(\sum_{i=1}^sn_i^2kt_i+l)}}{\displaystyle p^{\displaystyle\sum^s_{i=1}t_i}} \\ &
\\
=& \frac{\displaystyle(k+1)^sp^{\displaystyle\frac{2k}{k+1}r}}{\displaystyle p^{\displaystyle\sum^s_{i=1}t_i}} \\
=& \displaystyle\frac{(k+1)^s|R|^{\displaystyle\frac{2k}{k+1}}}{{p^{\displaystyle\sum^s_{i=1}t_i}}} \\
\leq & \left(\displaystyle\frac{k+1}{p}\right)^s|R|^{\displaystyle\frac{2k}{k+1}} \\
\leq & \displaystyle\frac{k+1}{p} |R|^{\displaystyle \frac{2k}{k+1}}.
\end{array}$$

\end{proof}
Now we proof the Theorem \ref{th3} 
\begin{proof}[Proof of Theorem \ref{th3}] 
We use the follow remark in \cite{Ghosseiri}. For a finite ring $R$ there is a decomposition as direct sum of   rings of prime power order. This decomposition of $(R,+)$ is uniquely determined up to isomorphims, so we can write $R= R_1\oplus R_2\oplus \cdots \oplus R_s$, where $|R_i|=p_i^{r_i}$ and the primes $p_i$ are the distinct prime divisors of $|R|$. By the Theorem \ref{th2} and the fact that $|\mathcal{I}_{k+1}(R)|=\prod^{s}_{i=1}|\mathcal{I}_{k+1}(R_i)|   $ follows the result immediately.
\end{proof}
And, finally we  enunciate the follow corollary that the proof follows immediately.
\begin{corollary}
If $R$ is a finite ring and $p$ is the smallest prime dividing $|R|$, then $$|\mathcal{I}_{k+1}(R)|\leq \left(\frac{k+1}{p}\right)^n|R|^{\displaystyle\frac{2k}{k+1}},$$ where $\mathcal{I}_{k+1}(R)$ denote the set of $(k+1)$-potent elements in $R$ and $n$ is the number of distinct primes dividing $R$.
\end{corollary}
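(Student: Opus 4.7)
The plan is to deduce this corollary directly from Theorem \ref{th3}, which already supplies the inequality
$$|\mathcal{I}_{k+1}(R)| \leq \frac{(k+1)^n}{\prod_{i=1}^{n} p_i}\,|R|^{\frac{2k}{k+1}}.$$
Since $p$ is defined as the smallest prime dividing $|R|$, every $p_i$ appearing in the product satisfies $p \leq p_i$, hence $1/p_i \leq 1/p$. I would multiply these $n$ inequalities to obtain $\prod_{i=1}^{n} 1/p_i \leq 1/p^{n}$, and then substitute back into the Theorem \ref{th3} bound.

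More concretely, the chain I would display reads
$$|\mathcal{I}_{k+1}(R)| \leq \frac{(k+1)^n}{\prod_{i=1}^{n} p_i}\,|R|^{\frac{2k}{k+1}} \leq \frac{(k+1)^n}{p^n}\,|R|^{\frac{2k}{k+1}} = \left(\frac{k+1}{p}\right)^{n}|R|^{\frac{2k}{k+1}},$$
which is precisely the desired inequality.

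Because the algebra is immediate once $p \leq p_i$ is invoked, there is no real obstacle; the entire content of the corollary is the monotonicity observation, and the work has already been carried out in the proofs of Lemma \ref{lema1}, Theorem \ref{th2}, and Theorem \ref{th3}. I would therefore write the proof as a one-line application of Theorem \ref{th3}, noting explicitly the bound $\prod_{i=1}^{n} p_i \geq p^{n}$ that converts the denominator into the cleaner form $p^{n}$.
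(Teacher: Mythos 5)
Your proposal is correct and is exactly the argument the paper intends: the paper states only that the corollary ``follows immediately,'' and the intended route is precisely your observation that $p \leq p_i$ for each $i$ gives $\prod_{i=1}^{n} p_i \geq p^{n}$, which converts the bound of Theorem \ref{th3} into the stated one.
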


\section{Table}
The following table present the quantity  of matrices in $M_n(F)$  with $k$ different eigenvalues, where $F $ is a finite field and $|F|=q$.
\begin{center}\begin{tabular}{|c|c||l|} \hline
$n$ & $k$ & \hspace{3.53cm} $|E(\alpha_1,\alpha_2,\cdots,\alpha_k)|$\\ \hline \hline
$3$ & $2$ & $2q^4+2q^3+2q^2$ \\ \hline
$3$ & $3$ & $q^6+2q^5+2q^4+q^3$\\ \hline
$4$ & $2$ & $q^8+q^7+4q^6+3q^5+3q^4+2q^3$\\ \hline
$4$ & $3$ & $3q^{10}+6q^9+9q^8+9q^7+6q^6+3q^5$\\ \hline
$4$ & $4$ & $q^{12}+3q^{11}+5q^{10}+6q^9+5q^8+3q^7+q^6$\\ \hline
$5$ & $2$ & $2q^{12}+2q^{11}+4q^{10}+4q^9+6q^8+4q^7+4q^6+2q^5+2q^4$ \\ \hline
$5$ & $3$ & $3q^{16}+6q^{15}+15q^{14}+21q^{13}+27q^{12}+27q^{11}+24q^{10}+15q^{9}+$ \\
 & & $9q^8+3q^7$ \\ \hline
$5$ & $4$ & $4q^{18}+12q^{17}+24q^{16}+36q^{15}+44q^{14}+44q^{13}+36q^{12}+24q^{11}+$\\
& & $12q^{10}+4q^9$\\ \hline
$5$ & $5$ & $q^{20}+4q^{19}+9q^{18}+15q^{17}+20q^{16}+22q^{15}+20q^{14}+15q^{13}+$\\ 
& & $9q^{12}+4q^{11}+q^{10}$\\ \hline
$6$ & $2$ & $q^{18}+q^{17}+4q^{16}+5q^{15}+7q^{14}+7q^{13}+9q^{12}+6q^{11}+7q^{10}+5q^9+$\\
& &$4q^8+2q^7+2q^6+2q^5$ \\ \hline
$6$ & $3$ & $q^{24}+2q^{23}+11q^{22}+19q^{21}+35q^{20}+48q^{19}+65q^{18}+72q^{17}+74q^{16}+$\\
& & $67q^{15}+56q^{14}+41q^{13}+25q^{12}+15q^{11}+6q^{10}+3q^9$\\ \hline
$6$ & $4$ & $6q^{26}+18q^{25}+46q^{24}+84q^{23}+132q^{22}+178q^{21}+212q^{20}+224q^{19}+$\\
& & $210q^{18}+176q^{17}+128q^{16}+82q^{15}+42q^{14}+18q^{13}+4q^{12}$ \\ \hline
$6$ & $5$ & $5q^{28}+20q^{27}+50q^{26}+95q^{25}+150q^{24}+205q^{23}+245q^{22}+260q^{21}+$\\ 
& & $245q^{20}+205q^{19}+150q^{18}+95q^{17}+50q^{16}+20q^{15}+5q^{14}$ \\ \hline
$6$ & $6$ & $q^{30}+5q^{29}+14q^{28}+29q^{27}+49q^{26}+71q^{25}+90q^{24}+101q^{23}+101q^{22}+$\\
& &$90q^{21}+71q^{20}+49q^{19}+29q^{18}+14q^{17}+5q^{16}+q^{15}$\\ \hline

\end{tabular}
\end{center}

\end{document}